\newtheorem{thm}{Theorem}[section]
\newtheorem{lem}{Lemma}[section]
\newtheorem{prop}{Proposition}[section]
\theoremstyle{definition}
\theoremstyle{remark}
\newtheorem{rem}{Remark}[section]
\numberwithin{equation}{section}
\begin{document}

\title{Density estimates for the exponential functionals of fractional Brownian motion}
\author{Nguyen Tien Dung\thanks{Department of Mathematics, VNU University of Science, Vietnam National University, Hanoi, 334 Nguyen
Trai, Thanh Xuan, Hanoi, 084 Vietnam.}\,\,\footnote{Corresponding author. Email: dung@hus.edu.vn}
\and Nguyen Thu Hang\thanks{Department of Mathematics, Hanoi University of Mining and Geology, 18 Pho Vien, Bac Tu Liem, Hanoi, 084 Vietnam.}
\and Pham Thi Phuong Thuy\thanks{The faculty of Basic Sciences, Vietnam Air Defence and Air Force Academy, Son Tay, Ha Noi, 084 Vietnam.}
}

\date{September 22, 2021}          

\maketitle
\begin{abstract} In this note, we investigate the density of the exponential functional of the fractional Brownian motion. Based on the techniques of Malliavin's calculus, we provide a log-normal upper bound for the density.
\end{abstract}
\noindent\emph{Keywords:} Fractional Brownian motion, Density estimates, Malliavin calculus.\\
{\em 2010 Mathematics Subject Classification:} 60G22, 60H07.
\section{Introduction}
Let $B^H=(B^H_t)_{t\in [0,T]}$ be a fractional Brownian motion (fBm) with Hurst index $H\in(0,1).$ We recall that $B^H$ is a centered Gaussian process with covariance function
$$R_H(t,s):=E[B^H_t B^H_s]=\frac{1}{2}(t^{2H}+s^{2H}-|t-s|^{2H}),\,\,0\leq s,t\leq T.$$
We consider the exponential functional of the form
\begin{equation}\label{yu1}
F=\int_0^T e^{as+\sigma B^H_s}ds,
\end{equation}
where $T>0,a\in \mathbb{R}$ and $\sigma>0$ are constants. It is known that this functional plays an important role in several domains.  The special case, where $H=\frac{1}{2},$ has been well studied and a lot of fruitful properties of $F$ can be founded in the literature, see e.g. \cite{Matsumoto2005a,Matsumoto2005b,Yor2001}.  However, to the best our knowledge, the deep properties of $F$ for $H\neq \frac{1}{2}$ are scarce. In a recent paper \cite{DungPG2019}, we have proved the Lipschitz continuity of the cumulative distribution function of $F$ with respect to the Hurst index $H.$ The aim of the present paper is to investigate the density of $F.$ Unlike the case $H=\frac{1}{2},$ it is not easy to find the density of $F$ explicitly for $H\neq \frac{1}{2}$ and hence, our work will focus on providing the estimates for the density function. It should be noted that, in the last years,  the density estimates for random variables related to fBm has been extensively studied, see e.g. \cite{Besalu2016,DungPG2016,Liu2017,Tan2020} and references therein.

The rest of this article is organized as follows. In Section \ref{uufk}, we briefly recall some of the relevant elements of the Malliavin calculus and two general estimates for densities. Our main results are then stated and proved in Section 3. Our Theorems \ref{kl1} and \ref{kl2} point out that the density of $F$ is bounded from above by log-normal densities.
\section{Preliminaries}\label{uufk}
In the whole paper, we assume $H>\frac{1}{2}.$ Under this assumption, fBm admits the Volterra representation
\begin{equation}\label{9jdm3}
B^H_t=\int_0^t K(t,s)dB_s,
\end{equation}
where $(B_t)_{t\in [0,T]}$ is a standard Brownian motion and  for some normalizing constant $c_H,$ the kernel  $K$ is given by
$$ K(t,s) = c_{H}s^{1/2 -H}\int_{s}^{t}(u-s)^{H-\frac{3}{2}}u^{H-\frac{1}{2}}du,\,\,0<s\leq t\leq T.$$
Let us recall some elements of Malliavin calculus with respect to Brownian motion $B,$ where $B$ is used to present $B^H_t$ as in (\ref{9jdm3}).  We suppose that $(B_t)_{t\in [0,T]}$ is defined on
a complete probability space $(\Omega,\mathcal{F},\mathbb{F},P)$, where $\mathbb{F}=(\mathcal{F}_t)_{t\in [0,T]}$ is a natural filtration generated by
the Brownian motion $B.$ For $h\in L^2[0,T],$ we denote by $B(h)$ the Wiener integral
$$B(h)=\int_0^T h(t)dB_t.$$
Let $\mathcal{S}$ denote the dense subset of $L^2(\Omega, \mathcal{F},P)$ consisting of smooth random variables of the form
\begin{equation}\label{ro}
F=f(B(h_1),...,B(h_n)),
\end{equation}
where $n\in \mathbb{N}, f\in C_b^\infty(\mathbb{R}^n),h_1,...,h_n\in L^2[0,T].$ If $F$ has the form (\ref{ro}), we define its Malliavin derivative as the process $DF:=\{D_tF, t\in [0,T]\}$ given by
$$D_tF=\sum\limits_{k=1}^n \frac{\partial f}{\partial x_k}(B(h_1),...,B(h_n)) h_k(t).$$
More generally, for each $k\geq 1,$ we can define the iterated derivative operator  by setting
$$D^{k}_{t_1,...,t_k}F=D_{t_1}...D_{t_k}F.$$
For any $p,k\geq 1,$ we shall denote by $\mathbb{D}^{k,p}$ 
the closure of $\mathcal{S}$ with respect to the norm
$$\|F\|^p_{k,p}:=E|F|^p+E\bigg[\int_0^T|D_{t_1}F|^pdt_1\bigg]+...+E\bigg[\int_0^T...\int_0^T|D^{k}_{t_1,...,t_k}F|^pdt_1...dt_k\bigg].$$
A random variable $F$ is said to be Malliavin differentiable if it belongs to $\mathbb{D}^{1,2}.$ For any  $F\in \mathbb{D}^{1,2},$ the Clark-Ocone formula says that
$$F-E[F]=\int_0^TE[D_sF|\mathcal{F}_s]dB_s.$$
Moreover, any  $F,G\in \mathbb{D}^{1,2},$ we have the following covariance formula
\begin{equation}\label{lfm9}
{\rm Cov}(F,G)=E\left[\int_0^TD_sF E[D_sG|\mathcal{F}_s]ds\right].
\end{equation}
In order to obtain the density estimates for exponential functionals we need the following general results.
\begin{prop}\label{p31}
 Let $q, \alpha, \beta$ be three positive real numbers such that $\frac{1}{q}+\frac{1}{\alpha}+\frac{1}{\beta}=1$. Let $F$ be a random variable in the space $\mathbb{D}^{2,\alpha}$, such that $E[||DF||^{-2\beta}_{H}] < \infty$. Then the density $\rho_{F}(x)$ of $F$ can be estimated as follows
\begin{align}
\rho_{F}(x) \leq c_{q, \alpha, \beta}(P(F\leq x))^{1/q}
	 \times \left(E[||DF||_{H}^{-1}]+||D^{2}F||_{L^{\alpha}(\Omega; H\otimes H)}\parallel ||DF||_{H}^{-2}\parallel_{\beta}\right),\,\,x\in \mathbb{R},
\end{align}
where $c_{q, \alpha, \beta}$ is a positive constant and $H=L^2[0,T].$
\end{prop}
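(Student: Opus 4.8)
The plan is to derive the density bound from the Malliavin integration-by-parts machinery, following the standard route that goes back to Nourdin--Viens and related work. First I would recall that for a random variable $F \in \mathbb{D}^{1,2}$ with a sufficiently integrable inverse Malliavin norm, the density $\rho_F$ admits the Nourdin--Viens-type representation via the function $g_F(x) = E[\langle DF, DL^{-1}(F-E[F])\rangle_H \mid F=x]$, or — more directly suited to the form of the bound here — one uses the classical identity obtained by integrating by parts: for a suitable test function $\varphi$,
\begin{equation}
E[\varphi'(F)] = E\left[\varphi(F)\, \delta\!\left(\frac{DF}{\|DF\|_H^2}\right)\right],
\end{equation}
where $\delta$ is the Skorokhod integral (divergence operator). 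Applying this with $\varphi$ approximating the indicator $\ind_{(-\infty,x]}$ and using that $\rho_F(x) = E[\delta(DF/\|DF\|_H^2)\, \ind_{\{F > x\}}]$ (or the $\ind_{\{F\le x\}}$ version), one reduces the problem to estimating $\|\delta(DF/\|DF\|_H^2)\|_{q'}$ for the conjugate exponent, times $(P(F\le x))^{1/q}$ from Hölder's inequality on the indicator factor.

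The second step is to bound the $L^{q'}$-norm of the divergence $\delta(u)$ with $u = DF/\|DF\|_H^2$. Here I would invoke the Meyer-type inequality $\|\delta(u)\|_{q'} \le c\,(\|u\|_{L^{q'}(\Omega;H)} + \|Du\|_{L^{q'}(\Omega;H\otimes H)})$, and then expand $Du$ using the quotient rule: $D(DF/\|DF\|_H^2) = D^2F/\|DF\|_H^2 - 2\, DF \otimes \langle DF, D^2F\rangle_H / \|DF\|_H^4$. Taking $H\otimes H$ norms and applying the triangle inequality plus Cauchy--Schwarz pointwise gives $\|Du\|_{H\otimes H} \le \|D^2F\|_{H\otimes H}\,\|DF\|_H^{-2} + 2\|D^2F\|_{H\otimes H}\,\|DF\|_H^{-2} = 3\,\|D^2F\|_{H\otimes H}\,\|DF\|_H^{-2}$, while $\|u\|_H = \|DF\|_H^{-1}$. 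Then a single application of the three-exponent Hölder inequality with exponents $(q, \alpha, \beta)$ — the $q$ going to the indicator, $\alpha$ to $\|D^2F\|_{H\otimes H}$, $\beta$ to $\|DF\|_H^{-2}$ — produces exactly the asserted product structure, with $E[\|DF\|_H^{-1}]$ coming from the $\|u\|_H$ term and $\|D^2F\|_{L^\alpha(\Omega;H\otimes H)} \cdot \big\|\|DF\|_H^{-2}\big\|_\beta$ from the $\|Du\|_{H\otimes H}$ term.

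The main obstacle, and the place requiring the most care, is the approximation argument that passes from the integration-by-parts formula for smooth $\varphi$ to the statement about the density at a point: one must justify that $F$ actually has a density (this is where $E[\|DF\|_H^{-2\beta}]<\infty$ enters, via a Bouleau--Hirsch-type criterion), that the divergence $\delta(DF/\|DF\|_H^2)$ is well-defined in $L^{q'}$ (requiring $DF/\|DF\|_H^2 \in \mathbb{D}^{1,q'}$, which follows from $F\in\mathbb{D}^{2,\alpha}$ together with the integrability hypothesis and Hölder), and that one may take the test functions to a Dirac mass to evaluate $\rho_F$ pointwise. Since this is a general-purpose lemma, I would in fact cite it from the literature rather than reprove it — the cleanest reference being the density-estimate results for functionals on Gaussian space, after which only the verification that our hypotheses match is needed. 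Everything after the IBP representation is then routine: the quotient rule, Meyer's inequality, and one Hölder step.
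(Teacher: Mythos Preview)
Your proposal is essentially in line with the paper: the paper does not reprove this proposition at all but simply records that it ``comes from the computations on page 87 in \cite{nualartm2}''. Your final remark---that you would cite the result from the literature rather than reprove it---is therefore exactly what the authors do, and the sketch you give is indeed the argument that appears on that page of Nualart's book.

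One small imprecision in your sketch: to obtain the first term in the stated form $E[\|DF\|_H^{-1}]$ (an $L^1$ quantity), you should use the inequality
\[
\|\delta(u)\|_{p}\;\le\; c_p\big(\,\|E[u]\|_H + \|Du\|_{L^{p}(\Omega;H\otimes H)}\big)
\]
(Proposition 1.5.8 in \cite{nualartm2}) rather than the standard Meyer inequality $\|\delta(u)\|_p\le c_p(\|u\|_{L^p(\Omega;H)}+\|Du\|_{L^p(\Omega;H\otimes H)})$. With your version the first term would come out as $\big\|\,\|DF\|_H^{-1}\big\|_{q'}$ instead of $E[\|DF\|_H^{-1}]$, which is still a valid bound under the hypotheses but does not match the statement verbatim. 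After that, your two-exponent H\"older step with $1/q'=1/\alpha+1/\beta$ on the $\|Du\|$ term is exactly right.
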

\begin{proof}This proposition comes from the computations on page 87 in \cite{nualartm2}.
\end{proof}
\begin{prop}\label{9hj3}Let $F\in\mathbb{D}^{2,4}$ be such that $E[F]=0.$ Define the random variable
$$\Phi_F:=\int_0^TD_sFE[D_sF|\mathcal{F}_s]ds.$$
 Assume that $\Phi_F\neq 0$ a.s. and the random variables $\frac{F}{\Phi_F}$ and $\frac{1}{\Phi_F^2}\int_0^TD_s\Phi_FE[D_sF|\mathcal{F}_s]ds$ belong to $L^2(\Omega).$ Then the law of $F$ has a continuous density given by
\begin{equation}\label{krkl2}
\rho_F(x)= \rho_F(0)\exp\left(-\int_0^x h_F(z)dz\right)\exp\left(-\int_0^x w_F(z)dz\right),\,\,\,x\in\mathrm{supp}\,\rho_F,
\end{equation}
where the functions $w_F$ and $h_F$ are defined by
$$w_F(z):=E\left[\frac{F}{\Phi_F}\big| F=z\right],\,\,\,h_F(z):=E\left[\frac{1}{\Phi_F^2}\int_0^TD_s\Phi_FE[D_sF|\mathcal{F}_s]ds\big| F=z\right].$$
\end{prop}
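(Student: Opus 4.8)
The plan is to show that the density $\rho_F$ satisfies, in the distributional sense, the first-order linear ODE $\rho_F'(x)=-(w_F(x)+h_F(x))\rho_F(x)$, and then to integrate it. The heart of the matter is the integration-by-parts identity
\begin{equation*}
E[\varphi'(F)]=E\!\left[\frac{\varphi(F)F}{\Phi_F}\right]+E\!\left[\frac{\varphi(F)}{\Phi_F^2}\int_0^TD_s\Phi_F\,E[D_sF|\mathcal{F}_s]\,ds\right],\qquad \varphi\in C_b^1(\mathbb{R}).
\end{equation*}
To obtain it I would apply the covariance formula (\ref{lfm9}) to the pair $G:=\varphi(F)/\Phi_F$ and $F$. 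Since $E[F]=0$, the left-hand side of (\ref{lfm9}) equals $E[\varphi(F)F/\Phi_F]$. For the right-hand side, the chain and quotient rules for the Malliavin derivative give $D_sG=\Phi_F^{-1}\varphi'(F)D_sF-\Phi_F^{-2}\varphi(F)D_s\Phi_F$; multiplying by $E[D_sF|\mathcal{F}_s]$, integrating in $s$, and using $\int_0^TD_sF\,E[D_sF|\mathcal{F}_s]\,ds=\Phi_F$, the first term contracts exactly to $\varphi'(F)$. Taking expectations and rearranging yields the displayed identity. The standing hypotheses are precisely calibrated to make this legitimate: the integrability of $F/\Phi_F$ and of $\Phi_F^{-2}\int_0^TD_s\Phi_F\,E[D_sF|\mathcal{F}_s]\,ds$ in $L^2(\Omega)$ ensures $G\in\mathbb{D}^{1,2}$ and controls both terms, while $F\in\mathbb{D}^{2,4}$ is needed because $D_s\Phi_F$ involves the iterated derivative $D^2F$.

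Next I would condition on $F$. Using the definitions of $w_F$ and $h_F$, the identity becomes $E[\varphi'(F)]=E[\varphi(F)(w_F(F)+h_F(F))]$ for all $\varphi\in C_b^1$. This is exactly the statement that $F$ has an absolutely continuous law with density
$$\rho_F(x)=E\big[(w_F(F)+h_F(F))\,\ind_{\{F>x\}}\big],$$
which is continuous in $x$ by dominated convergence (the integrability of $w_F(F)+h_F(F)$ being guaranteed by the hypotheses). Differentiating this expression, or equivalently reading off the distributional ODE from the duality identity, gives $\rho_F'(x)=-(w_F(x)+h_F(x))\rho_F(x)$.

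It remains to integrate the ODE. The support of $\rho_F$ is a (closed) interval, and since $E[F]=0$ while $\Phi_F\neq0$ a.s. forces $F$ to be non-degenerate, the point $0$ lies in the interior of $\mathrm{supp}\,\rho_F$; moreover $\rho_F>0$ throughout this interior. Hence on the interior of $\mathrm{supp}\,\rho_F$ we may write $(\log\rho_F)'(x)=-(w_F(x)+h_F(x))$, integrate from $0$ to $x$, and exponentiate to get $\rho_F(x)=\rho_F(0)\exp\!\big(-\int_0^x(w_F(z)+h_F(z))\,dz\big)$; splitting the exponential of the sum into the product of the two exponentials gives (\ref{krkl2}).

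The step I expect to be the main obstacle is the ``soft'' justification rather than the computation: verifying rigorously that $G=\varphi(F)/\Phi_F\in\mathbb{D}^{1,2}$ with the stated derivative, since $x\mapsto 1/x$ is not globally Lipschitz, and showing that $\mathrm{supp}\,\rho_F$ is an interval on which $\rho_F$ is strictly positive so that the division by $\rho_F$ in the integration step is valid. For the first point I would truncate $x\mapsto 1/x$ away from $0$ and $\infty$, apply the chain rule to the smooth truncations, and pass to the limit using the assumed $L^2$-integrability together with $\Phi_F\neq0$ a.s.; for the second, the connectedness of the support and positivity on its interior follow from the classical theory of laws of Malliavin-differentiable functionals whose ``Malliavin denominator'' $\Phi_F$ is almost surely nonzero.
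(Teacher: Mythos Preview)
The paper does not actually prove this proposition; its entire argument is the one-line citation ``This proposition is Theorem~7 in our recent paper \cite{Dung2020}.'' There is therefore no in-paper proof to compare against.

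That said, your sketch is a correct reconstruction of the argument behind the cited result. Applying the covariance formula \eqref{lfm9} to the pair $(G,F)$ with $G=\varphi(F)/\Phi_F$, using $E[F]=0$ and the chain/quotient rules for $DG$, yields exactly your integration-by-parts identity $E[\varphi'(F)]=E[\varphi(F)(w_F(F)+h_F(F))]$; the passage to the density formula and then to the ODE $\rho_F'=-(w_F+h_F)\rho_F$ is the standard route for Stein-type density representations. Your identification of the two soft points---the need to justify $G\in\mathbb{D}^{1,2}$ by truncating $x\mapsto 1/x$ and passing to the limit under the stated $L^2$ hypotheses, and the interval structure of $\mathrm{supp}\,\rho_F$ with $\rho_F>0$ on its interior (which follows from $\Phi_F\neq 0$ a.s.\ and the general theory of nondegenerate Wiener functionals)---is accurate, and these are precisely the places where the work lies in the full proof in \cite{Dung2020}. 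In short, your proposal supplies the argument the present paper only imports by citation.
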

\begin{proof}
This proposition is Theorem 7 in our recent paper \cite{Dung2020}.
\end{proof}

\section{The main results}\label{hje3}
In this Section, we provide explicit estimates for the density $\rho_F(x)$ of the functional $F$ defined by (\ref{yu1}). Our idea is to consider the random variable $X:=\ln F-E[\ln F]$ and use the relation $\rho_F(x)=\frac{1}{x}\rho_X(\ln x-E[\ln F]),\,\,x>0,$ where $\rho_X$ denotes the density of $X.$

 We need some technical results.
\begin{prop}\label{kfl6} Consider the random variable
$X:=\ln F-E[\ln F].$  It holds that
\begin{equation}\label{kld2}
0\leq D_\theta X\leq \sigma K(T,\theta)\,\,a.s.
\end{equation}
\begin{equation}\label{kld3}
0\leq D_rD_\theta X\leq 2\sigma^2K(T,\theta)K(T,r)\,\,a.s.
\end{equation}

\end{prop}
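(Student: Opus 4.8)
The plan is to compute the Malliavin derivatives of $X$ explicitly and then control them through two properties of the Volterra kernel: its positivity, and its monotonicity in the first variable. Since $B^H_s=\int_0^sK(s,u)\,dB_u$, one has $D_\theta B^H_s=K(s,\theta)$, with the convention $K(s,\theta)=0$ for $\theta>s$. Writing $g(s):=e^{as+\sigma B^H_s}>0$, the chain rule together with the commutation of $D$ and $\int_0^T\cdot\,ds$ gives $D_\theta F=\sigma\int_0^Tg(s)K(s,\theta)\,ds$, and since $E[\ln F]$ is deterministic,
\[
D_\theta X=D_\theta\ln F=\frac{D_\theta F}{F}=\frac{\sigma\int_0^Tg(s)K(s,\theta)\,ds}{\int_0^Tg(s)\,ds},
\]
the chain rule for $\ln$ being legitimate because $F>0$ a.s.\ (a standard localization argument handles the unboundedness of $\ln$). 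The lower bound in (\ref{kld2}) is now immediate from $g>0$ and $K\ge 0$.

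For the upper bound I would use that for $H>\tfrac12$ the map $t\mapsto K(t,\theta)$ is nondecreasing, since $\partial_tK(t,\theta)=c_H\theta^{1/2-H}(t-\theta)^{H-3/2}t^{H-1/2}\ge 0$ for $t>\theta$; hence $K(s,\theta)\le K(T,\theta)$ for all $s\le T$, and
\[
D_\theta X\le \sigma K(T,\theta)\,\frac{\int_0^Tg(s)\,ds}{\int_0^Tg(s)\,ds}=\sigma K(T,\theta),
\]
which is (\ref{kld2}). Differentiating once more, $D_rD_\theta F=\sigma^2\int_0^Tg(s)K(s,r)K(s,\theta)\,ds\ge 0$, and the quotient rule gives
\[
D_rD_\theta X=\frac{D_rD_\theta F}{F}-\frac{(D_\theta F)(D_rF)}{F^2}.
\]
Because $D_\theta F,D_rF\ge 0$, the subtracted term is nonnegative, so $D_rD_\theta X\le \frac{D_rD_\theta F}{F}\le\sigma^2K(T,r)K(T,\theta)\le 2\sigma^2K(T,\theta)K(T,r)$, bounding $K(s,\cdot)\le K(T,\cdot)$ as before; this yields the upper bound in (\ref{kld3}). (The factor $2$ in the statement can alternatively be obtained directly from the triangle inequality together with (\ref{kld2}).)

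The only nontrivial step — and the one I expect to be the main obstacle — is the lower bound $D_rD_\theta X\ge 0$ in (\ref{kld3}), which amounts to
\[
\Bigl(\int_0^Tg\,ds\Bigr)\Bigl(\int_0^Tg\,K(\cdot,r)K(\cdot,\theta)\,ds\Bigr)\ \ge\ \Bigl(\int_0^Tg\,K(\cdot,\theta)\,ds\Bigr)\Bigl(\int_0^Tg\,K(\cdot,r)\,ds\Bigr).
\]
Dividing by $F^2$, this says precisely that under the (random) probability measure $\nu(ds):=g(s)\,ds/F$ on $[0,T]$ one has $\mathrm{Cov}_\nu\bigl(K(\cdot,r),K(\cdot,\theta)\bigr)\ge 0$. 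Now both functions $s\mapsto K(s,r)$ and $s\mapsto K(s,\theta)$ are nondecreasing on $[0,T]$ — each vanishes on an initial interval and is strictly increasing afterwards, by the formula for $\partial_tK$ above — so the inequality follows from Chebyshev's correlation inequality: for $f,h$ monotone in the same direction, expanding $\iint(f(s)-f(s'))(h(s)-h(s'))\,\nu(ds)\,\nu(ds')\ge 0$ gives $\int fh\,d\nu\ge\int f\,d\nu\int h\,d\nu$. Since this argument is pathwise, the resulting inequality holds a.s., completing the proof.
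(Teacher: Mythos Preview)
Your proof is correct and follows essentially the same route as the paper: explicit computation of $D_\theta X$ and $D_rD_\theta X$, use of the monotonicity of $t\mapsto K(t,\theta)$ for the upper bounds, and---for the nontrivial lower bound $D_rD_\theta X\ge 0$---the recognition that it is a covariance under the random measure $\nu(ds)=g(s)\,ds/F$ (equivalently, the paper's auxiliary variable $U$ with density $g/F$) together with Chebyshev's association inequality for comonotone functions. Your upper bound on $D_rD_\theta X$ is in fact sharper (you get $\sigma^2K(T,\theta)K(T,r)$ rather than $2\sigma^2K(T,\theta)K(T,r)$), since you exploit the sign of the subtracted term; the paper simply bounds both summands by $\sigma^2K(T,\theta)K(T,r)$ via the triangle inequality.
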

\begin{proof}
By the chain rule for Malliavin derivatives, we have, for $0\leq r,\theta\leq T,$
\begin{equation}\label{ikf4}
D_\theta X=\frac{\sigma\int_\theta^T K(s,\theta)e^{as+\sigma B^H_s}ds}{\int_0^T e^{as+\sigma B^H_s}ds}
\end{equation}
and
$$D_rD_\theta X=\frac{\sigma^2\int_{\theta\vee r}^T K(s,\theta)K(s,r)e^{as+\sigma B^H_s}ds}{\int_0^T e^{as+\sigma B^H_s}ds}-\frac{\sigma^2\int_r^T K(s,r)e^{as+\sigma B^H_s}ds\int_\theta^T K(s,\theta)e^{as+\sigma B^H_s}ds}{\big(\int_0^T e^{as+\sigma B^H_s}ds\big)^2}.$$
Because the function $s\mapsto K(s,\theta)$ is non-decreasing for each $\theta,$ (\ref{kld2}) follows directly from (\ref{ikf4}). We also have
$$D_rD_\theta X\leq 2\sigma^2K(T,\theta)K(T,r)\,\,a.s.$$
To prove the non-negativity of the second order Malliavin derivative, we let $U$ be a random variable with the density function defined by
$$f(x)=\frac{e^{ax+\sigma B^H_x}}{\int_0^T e^{as+\sigma B^H_s}ds},\,\,0\leq x\leq T.$$
Denote by $E_U$ the expectation with respect to $U.$ We have
$$D_rD_\theta X=E_U[K(U,\theta)K(U,r)]-E_U[K(U,\theta)]E_U[K(U,r)].$$
Note that the functions $s\mapsto K(s,\theta)$ and $s\mapsto K(s,r)$ are non-decreasing. Hence, by Chebyshev's association inequality, $D_rD_\theta X\geq 0\,\,a.s.$ The proof of Proposition is complete.
\end{proof}
\begin{lem}\label{lem3.1}
Define $$M_{r}:=E\left[F|\mathcal{F}_r\right]=E\left[\int_0^Te^{as+\sigma B_s^H}ds\big|\mathcal{F}_r\right],\,\,0\leq r\leq T.$$
Then, for every $p\geq 2,$ we have
$$E\left[(\max\limits_{0\leq r\leq T}M_{r})^p\right]\leq C<\infty,$$
where $C$ is a positive constant depending on $p,T,a,\sigma$ and $H.$
\end{lem}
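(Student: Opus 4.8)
The plan is to reduce the statement to an $L^p$ bound on $F$ itself, and then estimate $E[F^p]$ directly using the Gaussian structure of $B^H$.

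First, observe that $(M_r)_{0\le r\le T}$ is a closed, and hence uniformly integrable, martingale with respect to the filtration $\mathbb{F}$, being the family of conditional expectations $M_r=E[F\mid\mathcal F_r]$ of the single integrable random variable $F$. Since $\mathbb{F}$ is the augmented natural filtration of a Brownian motion, this martingale admits a continuous modification, so $\max_{0\le r\le T}M_r$ is a well-defined, $\mathcal F_T$-measurable random variable; moreover $M_r\ge 0$ a.s.\ because $F\ge 0$. Doob's $L^p$ maximal inequality then gives
\begin{equation*}
E\Big[\big(\max_{0\le r\le T}M_r\big)^p\Big]\le\Big(\frac{p}{p-1}\Big)^p E[|M_T|^p]=\Big(\frac{p}{p-1}\Big)^p E[F^p],
\end{equation*}
so it suffices to show $E[F^p]<\infty$ with a bound depending only on $p,T,a,\sigma,H$.

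For the latter, I would apply Jensen's inequality with the probability measure $ds/T$ on $[0,T]$ and the convex map $x\mapsto x^p$, which yields the pointwise bound $F^p\le T^{p-1}\int_0^T e^{p(as+\sigma B^H_s)}\,ds$. Taking expectations and using Tonelli's theorem,
\begin{equation*}
E[F^p]\le T^{p-1}\int_0^T e^{pas}\,E\big[e^{p\sigma B^H_s}\big]\,ds.
\end{equation*}
Since $B^H_s$ is centered Gaussian with variance $s^{2H}$, one has $E[e^{p\sigma B^H_s}]=\exp(\tfrac12 p^2\sigma^2 s^{2H})$, hence
\begin{equation*}
E[F^p]\le T^{p-1}\int_0^T \exp\!\big(pas+\tfrac12 p^2\sigma^2 s^{2H}\big)\,ds\le T^{p}\exp\!\big(p|a|T+\tfrac12 p^2\sigma^2 T^{2H}\big)=:C_0<\infty.
\end{equation*}
Combining the two displays proves the claim with $C=(p/(p-1))^p C_0$.

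There is no serious obstacle here: the only point requiring a little care is the measurability and finiteness of $\max_{0\le r\le T}M_r$, which is handled by invoking the continuous modification of a Brownian martingale; alternatively, one could first establish the bound for the maximum over a finite mesh of $[0,T]$ and pass to the limit by monotone convergence, thereby avoiding any path-regularity statement altogether. Everything else is a routine application of Doob's inequality, Jensen's inequality, and the explicit moment generating function of a Gaussian random variable.
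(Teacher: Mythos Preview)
Your proof is correct. It differs genuinely from the paper's argument: the paper does not invoke Doob's $L^p$ maximal inequality but instead applies the Burkholder--Davis--Gundy inequality to $(M_r)_{0\le r\le T}$, then uses the Clark--Ocone formula to identify the integrand and bound the quadratic variation as $\langle M\rangle_T\le \sigma^2\int_0^T K^2(T,r)M_r^2\,dr$, and finally applies H\"older's inequality together with $E[M_r^p]\le E[F^p]$ to obtain $E[\langle M\rangle_T^{p/2}]\le \sigma^p T^{pH}E[F^p]$. Your route is more elementary and yields an explicit constant, bypassing both BDG and Clark--Ocone; the paper's route, while longer, ties the estimate to the Malliavin structure (via the kernel $K$) that is used throughout the rest of the article, and produces the quadratic-variation bound as a by-product. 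Either way, the argument ultimately rests on the finiteness of $E[F^p]$, which the paper leaves implicit but which you verify cleanly via Jensen's inequality and the Gaussian moment generating function.
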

\begin{proof}The stochastic process $M:=(M_{r})_{0\leq r\leq T}$ is a martingale with $M_0=E[F]$ and $M_T=F.$ Hence, by Burkh\"older-David-Gundy inequality, we have
\begin{equation}\label{9iof}
E\left[(\max\limits_{0\leq r\leq T}M_{r})^p\right]\leq c_p\left(M_{0}^p+E[\langle M\rangle_T^{p/2}]\right)= c_p\left((E[F])^p+E[\langle M\rangle_T^{p/2}]\right),
\end{equation}
where $c_p$ is a positive constant. Using the Clark-Ocone formula we have
\begin{align*}
M_{T}&=EM_{T}+\int_0^TE[D_rM_{T}|\mathcal{F}_r]dB_r\\&=E[F]+\sigma\int_0^T E\left[\int_r^T K(s,r)e^{as+\sigma B^H_s}ds\big|\mathcal{F}_r\right]dB_r,
\end{align*}
which gives us
\begin{align*}
\langle M\rangle_T&=\int_0^T \sigma^2 \left(E\left[\int_r^T K(s,r)e^{as+\sigma B^H_s}ds\big|\mathcal{F}_r\right]\right)^2dr\\
&\leq \int_0^T \sigma^2 K^2(T,r) M^2_rdr\,\,a.s.
\end{align*}
Then, by H\"older inequality, we have
\begin{align}
E[\langle M\rangle_T^{p/2}]&\leq \sigma^pE\left[\bigg(\int_0^T K^\frac{2p-4}{p}(T,r)K^\frac{4}{p}(T,r)M_{r}^2dr\bigg)^{p/2}\right]\notag\\
&\leq \sigma^p\bigg(\int_0^T K^{2}(T,r)dr\bigg)^{\frac{p}{2}-1}\bigg(\int_0^T K^2(T,r)E\left[M_{r}^{p}\right]dr\bigg)\notag\\
&\leq \sigma^pT^{(p-2)H}\bigg(\int_0^T K^2(T,r)E[F^p]dr\bigg)\notag\\
&= \sigma^pT^{pH}E[F^p].\label{9iofa}
\end{align}
Here we used the fact that $\int_0^T K^2(T,r)dr=E|B^H_T|^2=T^{2H}.$ So we obtain the desired conclusion by inserting (\ref{9iofa}) into (\ref{9iof}).
\end{proof}
\begin{prop}\label{iklw} Let $X$ be as in Proposition \ref{kfl6}. We define $\Phi_X:=\int_0^TD_sXE[D_sX|\mathcal{F}_s]ds.$ Then,
$$|\Phi_X|^{-1}\in L^p(\Omega),\,\,\forall\,p\geq 1.$$
We also have
$$\left(\int_0^T |D_\theta X|^2d\theta\right)^{-1}\in L^p(\Omega),\,\,\forall\,\,p\geq 1.$$
\end{prop}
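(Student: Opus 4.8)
The plan is to obtain a deterministic lower bound for $\Phi_X$ and for $\int_0^T |D_\theta X|^2 d\theta$ that is strictly positive almost surely, and then show that the reciprocal of that lower bound has moments of all orders. From Proposition~\ref{kfl6} we know $0 \le D_\theta X \le \sigma K(T,\theta)$, so $D_\theta X E[D_\theta X|\mathcal{F}_\theta] \ge 0$ and hence $\Phi_X \ge 0$; moreover $\Phi_X \ge \int_0^T |D_\theta X|^2 d\theta$ is not automatic, so I would instead aim directly at a lower bound for each quantity. The natural first step is to go back to the explicit expression \eqref{ikf4}: using the $U$-variable representation introduced in the proof of Proposition~\ref{kfl6}, we have $D_\theta X = \sigma\, E_U[K(U,\theta)\ind_{\{U \ge \theta\}}]$, equivalently $D_\theta X = \sigma\, E_U[K(U,\theta)]$ since $K(s,\theta)=0$ for $s<\theta$. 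Because $s\mapsto K(s,\theta)$ is non-decreasing and $U$ has full support on $[0,T]$ with density $f$, we can bound $D_\theta X$ from below on, say, $\theta \in [0,T/2]$ by $\sigma K(T/2,\theta)\, P_U(U \in [T/2,T])$, and $P_U(U\in[T/2,T]) = \frac{\int_{T/2}^T e^{as+\sigma B^H_s}ds}{\int_0^T e^{as+\sigma B^H_s}ds}$.

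The second step is to control this ratio from below. Writing $G := \int_0^T e^{as+\sigma B^H_s} ds$ (so $G=F$) and $G_1 := \int_{T/2}^T e^{as+\sigma B^H_s}ds$, I would use the crude bounds $G \le T \exp\big(|a|T + \sigma \max_{[0,T]}|B^H_s|\big)$ and $G_1 \ge \frac{T}{2}\exp\big(-|a|T - \sigma\max_{[0,T]}|B^H_s|\big)$, so that the ratio $G_1/G \ge \tfrac12 \exp\big(-2|a|T - 2\sigma \max_{[0,T]}|B^H_s|\big)$. This gives
\begin{equation*}
\int_0^T |D_\theta X|^2 d\theta \ge \int_0^{T/2} |D_\theta X|^2 d\theta \ge \frac{\sigma^2}{4}\Big(\int_0^{T/2} K^2(T/2,\theta)\, d\theta\Big)\exp\Big(-4|a|T - 4\sigma \max_{0\le s\le T}|B^H_s|\Big),
\end{equation*}
and $\int_0^{T/2} K^2(T/2,\theta) d\theta = E|B^H_{T/2}|^2 = (T/2)^{2H} > 0$ is a positive constant. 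Hence the reciprocal is dominated by a constant times $\exp\big(4\sigma \max_{[0,T]}|B^H_s|\big)$, whose $L^p$-norm is finite for every $p\ge 1$ by Fernique's theorem (the supremum of a Gaussian process has Gaussian tails, so all exponential moments of it are finite). For $\Phi_X$, I would use the same idea: $\Phi_X = \int_0^T D_\theta X\, E[D_\theta X|\mathcal{F}_\theta]\, d\theta$, and on $[0,T/2]$ one bounds $E[D_\theta X|\mathcal{F}_\theta]$ below by conditioning the above lower bound — more directly, $E[D_\theta X|\mathcal{F}_\theta] = \sigma\, E\big[\int_\theta^T K(s,\theta) e^{as+\sigma B^H_s}ds \,\big|\, \mathcal{F}_\theta\big] / \dots$ is awkward because the normalizing denominator is not $\mathcal{F}_\theta$-measurable, so instead I would exploit the covariance formula \eqref{lfm9}: $\Phi_X = \mathrm{Var}(X) = \mathrm{Cov}(X,X) \ge 0$, which is not quite what we want either. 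The cleanest route is: since $D_\theta X \ge 0$, Jensen gives $E[D_\theta X|\mathcal{F}_\theta] \ge 0$, and $\Phi_X \ge \int_0^{T/2} D_\theta X\, E[D_\theta X|\mathcal{F}_\theta] d\theta$; then bound $D_\theta X$ below as above and bound $E[D_\theta X|\mathcal{F}_\theta]$ below by $\sigma\, E\big[\,\mathbf{1}_{\{U\ge\theta\}}K(T/2\vee\theta,\theta)\,\big|\,\mathcal{F}_\theta\big]$-type estimate, ultimately again landing on an expression of the form $c(T,a,\sigma,H)\exp(-C\max_{[0,T]}|B^H_s|)$ after using the same deterministic envelope for $F$.

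The main obstacle will be handling $E[D_\theta X|\mathcal{F}_\theta]$: unlike $D_\theta X$ itself, it involves a conditional expectation of a ratio whose denominator $F$ is not $\mathcal{F}_\theta$-measurable, so the $U$-representation does not survive the conditioning cleanly. The trick to get around this is to \emph{not} work with $E[D_\theta X|\mathcal{F}_\theta]$ pointwise, but to lower-bound it in $L^2$-integrated form, for instance by writing $\int_0^T E[D_\theta X|\mathcal{F}_\theta]\,dB_\theta = X$ (Clark--Ocone) together with the martingale machinery of Lemma~\ref{lem3.1}, or more simply to replace $F$ in the denominator by its deterministic upper envelope $T e^{|a|T + \sigma\max|B^H|}$ \emph{before} conditioning — this is legitimate because it only makes $D_\theta X$ smaller — so that inside the conditional expectation we are left with $E\big[\int_\theta^T K(s,\theta) e^{as+\sigma B^H_s}\,ds\,\big|\,\mathcal{F}_\theta\big]$ divided by an $\mathcal{F}_\theta$-measurable (indeed deterministic after envelope) quantity, and the remaining conditional expectation is bounded below by $K(T,\theta)e^{B^H_\theta}$-type terms via Jensen. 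Once both quantities are bounded below by $c \exp(-C \max_{0\le s\le T}|B^H_s|)$ with deterministic $c>0$, $C>0$, the conclusion $|\Phi_X|^{-1}, \big(\int_0^T|D_\theta X|^2 d\theta\big)^{-1} \in L^p(\Omega)$ for all $p\ge 1$ follows immediately from Fernique's theorem.
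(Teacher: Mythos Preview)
Your argument for $\big(\int_0^T|D_\theta X|^2d\theta\big)^{-1}\in L^p$ is correct and in fact a little cleaner than the paper's: restricting to $\theta\in[0,T/2]$ and using $D_\theta X\ge \sigma K(T/2,\theta)\,P_U(U\ge T/2)$ gives a lower bound $c\,e^{-C\max_{[0,T]}|B^H_s|}$ directly, and Fernique finishes. The paper instead uses the global bound $D_\theta X\ge \frac{\sigma}{T}e^{-2|a|T+\sigma(\min B^H-\max B^H)}\int_\theta^T K(s,\theta)\,ds$ and the identity $\int_0^T\big(\int_\theta^T K(s,\theta)ds\big)^2d\theta=\frac{T^{2H+2}}{2H+2}$; both routes work.

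The gap is in your treatment of $\Phi_X$. The ``trick'' you propose --- replacing $F$ in the denominator by the envelope $T e^{|a|T+\sigma\max_{[0,T]}|B^H|}$ before conditioning --- does not do what you claim. That envelope is \emph{not} deterministic and \emph{not} $\mathcal{F}_\theta$-measurable (it depends on the whole path of $B^H$ on $[0,T]$), so after taking $E[\,\cdot\,|\mathcal{F}_\theta]$ it cannot be pulled out of the expectation. You are still left with the conditional expectation of a ratio of two non-$\mathcal{F}_\theta$-measurable quantities, which is exactly the difficulty you identified. Consequently the final claim that $\Phi_X$ is bounded below by $c\,\exp(-C\max|B^H|)$ is not established; indeed the paper's lower bound is \emph{not} of this simple form.

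The paper resolves this with a Cauchy--Schwarz step that genuinely separates numerator and denominator under the conditional expectation: writing $\sqrt{A}=\sqrt{A/B}\cdot\sqrt{B}$ gives $E[A/B|\mathcal{F}_\theta]\ge \big(E[\sqrt{A}\,|\mathcal{F}_\theta]\big)^2/E[B|\mathcal{F}_\theta]$, applied with $A=\int_\theta^T K(s,\theta)e^{as+\sigma B^H_s}ds$ and $B=F$. A second Cauchy--Schwarz on $\sqrt{A}$ and Jensen on $E[e^{\sigma B^H_s/2}|\mathcal{F}_\theta]$ then introduce the Gaussian field $N_{s,\theta}:=E[B^H_s|\mathcal{F}_\theta]=\int_0^\theta K(s,r)\,dB_r$ in the numerator and the martingale $M_\theta:=E[F|\mathcal{F}_\theta]$ in the denominator. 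The resulting lower bound on $\Phi_X$ involves $e^{\sigma\min N_{s,\theta}}/\max_\theta M_\theta$, so one needs Fernique for the two-parameter Gaussian field $N$ \emph{and} Lemma~\ref{lem3.1} (BDG plus moment bounds for $F$) to control $\max_\theta M_\theta$. This is precisely why Lemma~\ref{lem3.1} is in the paper; your sketch bypasses it, and that is where the argument breaks.
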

\begin{proof} It follows from (\ref{ikf4}) that
\begin{equation}\label{uuu1}
D_\theta X\geq \frac{\sigma}{T}e^{-2|a|T+\sigma \min\limits_{0\leq s\leq T} B^H_s-\sigma \max\limits_{0\leq s\leq T} B^H_s}\int_\theta^T K(s,\theta)ds\,\,a.s.
\end{equation}
On the other hand, by using the Cauchy-Schwarz  inequality, we have
$$
E[D_\theta X|F_\theta]\geq \frac{\sigma\left(E\bigg[\sqrt{\int_\theta^T K(s,\theta)e^{as+\sigma B^H_s}ds}\big|F_\theta\bigg]\right)^2}{E\bigg[\int_0^T e^{as+\sigma B^H_s}ds\big|F_\theta\bigg]}\,\,a.s.
$$
and
$$\sqrt{\int_\theta^T K(s,\theta)e^{as+\sigma B^H_s}ds}\geq \frac{\int_\theta^T K(s,\theta)\sqrt{e^{as+\sigma B^H_s}}ds}{\sqrt{\int_\theta^T K(s,\theta)ds}}\,\,a.s.$$
We therefore get
$$
E[D_\theta X|F_\theta]\geq \frac{\sigma\left(\int_\theta^T K(s,\theta)E\big[e^{as/2+\sigma B^H_s/2}\big|F_\theta\big]ds\right)^2}{\int_\theta^T K(s,\theta)dsE\bigg[\int_0^T e^{as+\sigma B^H_s}ds\big|F_\theta\bigg]}\,\,a.s.
$$
Furthermore, by Lyapunov's inequality, $$E\big[e^{as/2+\sigma B^H_s/2}\big|F_\theta \big]\geq e^{as/2+\sigma E[B^H_s|F_\theta]/2}\,\,a.s.$$
As a consequence,
\begin{equation}\label{uuu2}
E[D_\theta X|F_\theta]\geq \frac{\sigma e^{-|a|T+\sigma \min\limits_{0\leq \theta\leq s\leq T} N_{s,\theta}}\int_\theta^T K(s,\theta)ds}{\max\limits_{0\leq \theta\leq T}M_\theta}\,\,a.s.
\end{equation}
where $N_{s,\theta}:=E[B^H_s|F_\theta]$ and $M_\theta:=E\bigg[\int_0^T e^{as+\sigma B^H_s}ds\big|F_\theta\bigg].$

Combining (\ref{uuu1}) and (\ref{uuu2}) yields
$$D_\theta XE[D_\theta X|F_\theta]\geq \frac{\sigma^2}{T}e^{-3|a|T+\sigma \min\limits_{0\leq s\leq T} B^H_s-\sigma \max\limits_{0\leq s\leq T} B^H_s+\sigma \min\limits_{0\leq \theta\leq s\leq T} N_{s,\theta}} \frac{\left(\int_\theta^T K(s,\theta)ds\right)^2}{\max\limits_{0\leq \theta\leq T} M_\theta}\,\,a.s.$$
and hence,
\begin{equation}\label{ol0}
\Phi_X\geq \frac{\sigma^2}{T}e^{-3|a|T+\sigma \min\limits_{0\leq s\leq T} B^H_s-\sigma \max\limits_{0\leq s\leq T} B^H_s+\sigma \min\limits_{0\leq \theta\leq s\leq T} N_{s,\theta}} \frac{\int_0^T\left(\int_\theta^T K(s,\theta)ds\right)^2d\theta}{\max\limits_{0\leq \theta\leq T} M_\theta}\,\,a.s.
\end{equation}
We observe that
\begin{align*}
\int_0^T\left(\int_\theta^T K(s,\theta)ds\right)^2d\theta&=\int_0^T\int_\theta^T \int_\theta^TK(t,\theta)K(s,\theta)dsdtd\theta\\
&=\int_0^T\int_0^T\left( \int_0^{s\wedge t}K(t,\theta)K(s,\theta)d\theta\right)dsdt\\
&=\int_0^T\int_0^TE[B^H_tB^H_s]dsdt=\frac{T^{2H+2}}{2H+2}.
\end{align*}
This, together with (\ref{ol0}), yields
$$|\Phi_X|^{-1}\leq \frac{2H+2}{\sigma^2T^{2H+1}}e^{3|a|T+2\sigma \max\limits_{0\leq s\leq T} B^H_s+\sigma \max\limits_{0\leq \theta\leq s\leq T} N_{s,\theta}} \max\limits_{0\leq \theta\leq T} M_\theta\,\,a.s.$$
We have $(N_{s,\theta})_{0\leq \theta\leq s\leq T}$ is a Gaussian field with finite variances because $N_{s,\theta}=\int_0^\theta K_H(s,r)dB_r.$ Hence, by Fernique's theorem, there exists $\varepsilon>0$ such that  $E\left[e^{\varepsilon \max\limits_{0\leq \theta\leq s\leq T} |N_{s,\theta}|^2}\right]<\infty.$ Since $e^{p\sigma \max\limits_{0\leq \theta\leq s\leq T} N_{s,\theta}}\leq e^{\frac{p^2\sigma^2}{4\varepsilon}+\varepsilon \max\limits_{0\leq \theta\leq s\leq T} |N_{s,\theta}|^2},$ this implies that $e^{\sigma \max\limits_{0\leq \theta\leq s\leq T} N_{s,\theta}}\in L^p(\Omega)$ for any $p\geq 1.$ Similarly, we also have $e^{2\sigma \max\limits_{0\leq s\leq T} B^H_s}\in L^p(\Omega)$ for any $p\geq 1.$ So, recalling Lemma \ref{lem3.1}, we conclude that $|\Phi_X|^{-1}\in L^p(\Omega)$ for any $p\geq 1.$

We deduce from (\ref{uuu1}) that
\begin{equation}
\int_0^T|D_\theta X|^2d\theta\geq \frac{\sigma^2}{(2H+2)T^{2H}}e^{-4|a|T+2\sigma \min\limits_{0\leq s\leq T} B^H_s-2\sigma \max\limits_{0\leq s\leq T} B^H_s}\,\,a.s.
\end{equation}
Hence, we also have $\left(\int_0^T |D_\theta X|^2d\theta\right)^{-1}\in L^p(\Omega),\,\,\forall\,\,p\geq 1.$ The proof of Proposition is complete.
\end{proof}
We now are in a position to bound the density $\rho_F(x)$ of $F.$ We first use Proposition \ref{p31} to estimate the left tail of the density.
\begin{thm}\label{kl1} We have
\begin{equation}\label{jkvm1}
\rho_F(x)\leq \frac{c}{x}\exp\left({-\frac{(\ln x-E[\ln F])^2}{8\sigma^2T^{2H}}}\right),\,\,0<x\leq e^{E[\ln F]},
\end{equation}
where $c$ is a positive constant.
\end{thm}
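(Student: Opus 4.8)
The plan is to transfer everything to the centered variable $X:=\ln F-E[\ln F]$. Since $F>0$ a.s. we have $F=e^{X+E[\ln F]}$, hence $\rho_F(x)=\frac1x\rho_X(\ln x-E[\ln F])$ for $x>0$, and the range $0<x\le e^{E[\ln F]}$ corresponds exactly to $y:=\ln x-E[\ln F]\le 0$. So it suffices to prove $\rho_X(y)\le c\,\exp\!\big(-y^2/(8\sigma^2T^{2H})\big)$ for $y\le 0$. I would get this by applying Proposition \ref{p31} to $X$ with the exponent triple $q=4$, $\alpha=\beta=\tfrac83$ (so that $\frac1q+\frac1\alpha+\frac1\beta=1$), and then replacing the factor $(P(X\le y))^{1/q}$ by a Gaussian left-tail estimate.

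First I would check the hypotheses of Proposition \ref{p31} and the finiteness of the constant it produces. From (\ref{kld2}), (\ref{kld3}) and $\int_0^TK^2(T,\theta)\,d\theta=T^{2H}$ one obtains the \emph{deterministic} bounds $\|DX\|_H\le\sigma T^{H}$ and $\|D^2X\|_{H\otimes H}\le 2\sigma^2T^{2H}$; together with the trivial $L^p$-bound on $X$ itself (coming from $\ln T-|a|T+\sigma\min_sB^H_s\le\ln F\le\ln T+|a|T+\sigma\max_sB^H_s$ and Fernique's theorem) this gives $X\in\mathbb D^{2,\alpha}$ for every $\alpha$. Proposition \ref{iklw} furnishes $\|DX\|_H^{-2}\in L^p(\Omega)$ for all $p\ge 1$, whence $E[\|DX\|_H^{-2\beta}]<\infty$, $E[\|DX\|_H^{-1}]<\infty$ and $\big\|\,\|DX\|_H^{-2}\,\big\|_{\beta}<\infty$. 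Consequently the bracketed factor in Proposition \ref{p31} is a finite constant $C_1$, and Proposition \ref{p31} gives $\rho_X(y)\le c_{q,\alpha,\beta}\,C_1\,(P(X\le y))^{1/4}$ for every $y$ (in particular $\rho_X$ exists).

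The key step, which I would isolate, is the tail bound
\[
P(X\le y)\le\exp\!\Big(-\frac{y^2}{2\sigma^2T^{2H}}\Big),\qquad y<0.
\]
Because $E[X]=0$, the Clark--Ocone formula represents $X$ as the terminal value $M_T$ of the continuous martingale $M_r=\int_0^rE[D_sX|\mathcal F_s]\,dB_s$. Taking conditional expectations in $0\le D_sX\le\sigma K(T,s)$ a.s. yields $0\le E[D_sX|\mathcal F_s]\le\sigma K(T,s)$ a.s., so $\langle M\rangle_T=\int_0^T(E[D_sX|\mathcal F_s])^2\,ds\le\sigma^2T^{2H}$ almost surely. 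Then $\exp(\lambda M_r-\tfrac{\lambda^2}{2}\langle M\rangle_r)$ is a (nonnegative) supermartingale, so $E[e^{\lambda X}]\le e^{\lambda^2\sigma^2T^{2H}/2}$ for every real $\lambda$, and a Chernoff estimate with the optimal choice $\lambda=-y/(\sigma^2T^{2H})>0$ gives the display.

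Combining the pieces with $q=4$ gives $\rho_X(y)\le c_{4,8/3,8/3}\,C_1\,e^{-y^2/(8\sigma^2T^{2H})}$ for $y\le 0$ (the endpoint $y=0$ being the trivial bound $P(X\le 0)\le 1$), and then $\rho_F(x)=\frac1x\rho_X(\ln x-E[\ln F])$ yields (\ref{jkvm1}) with $c=c_{4,8/3,8/3}\,C_1$. I expect the only genuinely delicate point to be the \emph{almost sure} bound $\langle M\rangle_T\le\sigma^2T^{2H}$: this is precisely where the pointwise upper bound (\ref{kld2}) on $D_\theta X$, rather than a mere $L^2$ estimate, is used, and it is this boundedness of the Malliavin derivative that forces the log-normal shape of the estimate. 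Everything else is bookkeeping with the constants already supplied by Propositions \ref{kfl6} and \ref{iklw}.
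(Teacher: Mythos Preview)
Your proof is correct. The application of Proposition~\ref{p31} is essentially the same as the paper's (you take $q=4$, $\alpha=\beta=\tfrac83$ while the paper uses $q=\beta=4$, $\alpha=2$, a cosmetic difference), and the passage from $\rho_X$ to $\rho_F$ is identical.

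The genuine divergence is in the left-tail bound $P(X\le y)\le\exp(-y^2/(2\sigma^2T^{2H}))$. You obtain it via Clark--Ocone: writing $X=M_T$ with $M_r=\int_0^rE[D_sX\mid\mathcal F_s]\,dB_s$, you use only the first-order pointwise bound $0\le D_sX\le\sigma K(T,s)$ to get $\langle M\rangle_T\le\sigma^2T^{2H}$ a.s., and then the exponential supermartingale gives the sub-Gaussian Laplace transform directly. The paper instead studies $\varphi(\lambda)=E[e^{-\lambda X}]$, applies the covariance formula \eqref{lfm9} twice to get $\varphi'(\lambda)=\lambda\sigma_X^2\varphi(\lambda)-\lambda^2E\big[e^{-\lambda X}\int_0^TD_sX\,E[D_s\Phi_X\mid\mathcal F_s]\,ds\big]$, and then uses the \emph{second-order} nonnegativity $D_rD_sX\ge 0$ from Proposition~\ref{kfl6} to drop the last term, yielding $\varphi'(\lambda)\le\lambda\sigma_X^2\varphi(\lambda)$ and hence $\varphi(\lambda)\le e^{\lambda^2\sigma_X^2/2}$ with $\sigma_X^2={\rm Var}(X)$. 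Your route is more elementary in that it never touches the sign of $D_rD_sX$; the paper's route produces the slightly sharper constant $\sigma_X^2$ in place of $\sigma^2T^{2H}$ at an intermediate stage, though it then bounds $\sigma_X^2\le\sigma^2T^{2H}$ anyway, so the final estimate is the same.
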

\begin{proof} It is known from Proposition \ref{iklw} that
$$||DX||^{-2}_{H}=\left(\int_0^T |D_\theta X|^2d\theta\right)^{-1}\in L^p(\Omega),\,\,\forall\,\,p\geq 1.$$
In addition, from the estimate (\ref{kld3}), we have
$$||D^{2}X||_{L^{2}(\Omega; H\otimes H)}^2=\int_0^T\int_0^T E|D_\theta D_rX|^2d\theta dr\leq  \sigma^4\int_{0}^T \int_0^TK^2(T,\theta)K^2(T,r)d\theta dr= \sigma^4 T^{2H}<\infty.$$
The above estimates allow us to use Proposition \ref{p31} with $q=\beta=4,\alpha=2$ and we obtain
\begin{equation}\label{jkvm1a}
\rho_X(x)\leq cP(X\leq x)^{\frac{1}{4}},\,\,x\in \mathbb{R},
\end{equation}
where $c$ is a positive constant.

The remaining of the proof is to bound $P(X\leq x)$ for $x\leq 0.$ We  consider the function $\varphi(\lambda):=E[e^{-\lambda  X}],\,\,\lambda>0$ (this function is well defined because $F^{-1}\in L^p(\Omega),\,\,\forall\,\,p\geq 1$). By using repeatedly the covariance formula (\ref{lfm9}), we have
$$\sigma^2_X:={\rm Var}(X)=E[\Phi_X]$$
and
\begin{align*}
\varphi'(\lambda)&=-E[ X e^{-\lambda  X}]\\
&=\lambda E[e^{-\lambda  X}\Phi_X]\\
&=\lambda \sigma^2_XE[e^{-\lambda  X}]+\lambda E[e^{-\lambda  X}(\Phi_X-\sigma^2_X)]\\
&=\lambda \sigma^2_XE[e^{-\lambda  X}]-\lambda^2E\left[e^{-\lambda  X}\int_0^T D_sXE[D_s\Phi_X|\mathcal{F}_s]ds\right]
\end{align*}
Since $D_sX\geq 0$ and $D_rD_sX\geq 0,$ those imply that $\int_0^T D_sXE[D_s\Phi_X|\mathcal{F}_s]ds\geq 0,$ and hence,
$$\varphi'(\lambda)\leq \lambda \sigma^2_XE[e^{-\lambda X}]=\lambda \sigma^2_X\varphi(\lambda),\,\,\lambda>0.$$
This, together the fact $\varphi(0)=1,$ gives us
$$\varphi(\lambda)\leq e^{\frac{\lambda^2 \sigma^2_X}{2}},\,\,\lambda>0.$$
By Markov's inequality we have, for all $\lambda>0,$
$$P(X\leq x)\leq e^{\lambda x}\varphi(\lambda)\leq e^{\lambda x+\frac{\lambda^2 \sigma^2_X}{2}},\,\,x\leq 0.$$
When $x\leq 0,$ we can choose $\lambda=-\frac{x}{\sigma^2_X}$ to get
$$
P(X\leq x)\leq e^{-\frac{x^2}{2\sigma^2_X}},\,\,x\leq 0.
$$
From the estimate (\ref{kld2}), we have $\sigma^2_X=E[\Phi_X]\leq \int_0^T |D_sX|^2ds\leq \sigma^2 T^{2H}.$ So we deduce
\begin{equation}\label{jkvm1b}
P(X\leq x)\leq e^{-\frac{x^2}{2\sigma^2 T^{2H}}},\,\,x\leq 0.
\end{equation}
Combining (\ref{jkvm1a}) and (\ref{jkvm1b}) yields
$$\rho_X(x)\leq c\,e^{-\frac{x^2}{8\sigma^2 T^{2H}}},\,\,x\leq 0.
$$
where $c$ is a positive constant. Recalling $X=\ln F-E[\ln F],$ the density of $F$ satisfies $\rho_F(x)=\frac{1}{x}\rho_X(\ln x-E[\ln F]).$ When $0<x\leq e^{E[\ln F]},$ we have $y:=\ln x-E[\ln F]\leq 0.$ We thus obtain
$$\rho_F(x)=\frac{1}{x}\rho_X(y)\leq \frac{c}{x}e^{-\frac{y^2}{8\sigma^2 T^{2H}}}= \frac{c}{x}e^{-\frac{(\ln x- E[\ln F])^2}{8\sigma^2 T^{2H}}},\,\,0<x\leq e^{E[\ln F]}.$$
This completes the proof of Theorem.
\end{proof}
\begin{rem} Replacing $X$ by $F-E[F]$ in the proof of Theorem \ref{kl1}, we obtain the following Gaussian bound for the left tail
\begin{equation*}\label{jkvm1}
\rho_F(x)\leq c\,e^{-\frac{(x-E[F])^2}{8\sigma^2_F}},\,\,x\leq E[F],
\end{equation*}
where $\sigma^2_F:={\rm Var}(F)$ and $c$ is a positive constant.
\end{rem}
We now use Proposition \ref{9hj3} to estimate the right tail of the density.
\begin{thm}\label{kl2} We have
\begin{equation}\label{kfm5}
\rho_F(x)\leq \frac{c}{x}\exp\left({-\frac{(\ln x-E[\ln F])^2}{2\sigma^2T^{2H}}}\right),\,\,x>e^{E[\ln F]},
\end{equation}
where $c$ is a positive constant.
\end{thm}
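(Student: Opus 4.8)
The plan is to apply Proposition \ref{9hj3} to the centered variable $X:=\ln F-E[\ln F]$ and to read off an upper bound for $\rho_X(x)$ when $x>0$ directly from the exponential representation (\ref{krkl2}); translating back through the identity $\rho_F(x)=\frac1x\rho_X(\ln x-E[\ln F])$ then yields (\ref{kfm5}) on the range $x>e^{E[\ln F]}$, exactly as in the last lines of the proof of Theorem \ref{kl1}.

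First I would verify that $X$ meets the hypotheses of Proposition \ref{9hj3}. Membership $X\in\mathbb{D}^{2,4}$ holds because $F,F^{-1}\in L^p(\Omega)$ for all $p\ge 1$, so $\ln F\in L^p(\Omega)$, while (\ref{kld2})--(\ref{kld3}) bound $\|DX\|_H$ and $\|D^2X\|_{H\otimes H}$ a.s.\ by deterministic constants; the condition $E[X]=0$ is built in. That $\Phi_X\neq 0$ a.s.\ and $\Phi_X^{-1}\in L^p(\Omega)$ for every $p$ is precisely Proposition \ref{iklw}, whence $X/\Phi_X\in L^2(\Omega)$ by H\"older's inequality. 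The only hypothesis requiring real work is $\frac{1}{\Phi_X^2}\int_0^TD_s\Phi_XE[D_sX|\mathcal{F}_s]\,ds\in L^2(\Omega)$. For this I would differentiate $\Phi_X=\int_0^TD_sX\,E[D_sX|\mathcal{F}_s]\,ds$ using the product rule together with the commutation relation $D_rE[D_sX|\mathcal{F}_s]=E[D_rD_sX|\mathcal{F}_s]\ind_{\{r\le s\}}$, obtaining
$$D_r\Phi_X=\int_0^T D_rD_sX\,E[D_sX|\mathcal{F}_s]\,ds+\int_r^T D_sX\,E[D_rD_sX|\mathcal{F}_s]\,ds;$$
the bounds $0\le D_sX\le\sigma K(T,s)$ and $0\le D_rD_sX\le 2\sigma^2 K(T,r)K(T,s)$ from Proposition \ref{kfl6} (which pass to the conditional expectations) then give $0\le D_r\Phi_X\le 4\sigma^3 T^{2H}K(T,r)$ a.s., hence $0\le\int_0^T D_s\Phi_X E[D_sX|\mathcal{F}_s]\,ds\le 4\sigma^4 T^{4H}$ a.s.; combined with $\Phi_X^{-2}\in L^p(\Omega)$, the required membership follows.

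The heart of the argument is then the sign analysis producing Gaussian decay. From $0\le D_sX\le\sigma K(T,s)$ we get $0\le E[D_sX|\mathcal{F}_s]\le\sigma K(T,s)$, so $\Phi_X\le\sigma^2\int_0^TK^2(T,s)\,ds=\sigma^2T^{2H}$ a.s.; consequently, for $z>0$,
$$w_X(z)=E\Big[\tfrac{X}{\Phi_X}\Big|X=z\Big]=z\,E\Big[\tfrac{1}{\Phi_X}\Big|X=z\Big]\ge\frac{z}{\sigma^2T^{2H}}.$$
Likewise $D_r\Phi_X\ge 0$ (shown above) and $E[D_sX|\mathcal{F}_s]\ge 0$ force $\int_0^TD_s\Phi_XE[D_sX|\mathcal{F}_s]\,ds\ge 0$, so $h_X(z)\ge 0$ for every $z$. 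Inserting $h_X\ge 0$ and $w_X(z)\ge z/(\sigma^2T^{2H})$ on $(0,x]$ into (\ref{krkl2}) gives, for $x>0$ in $\mathrm{supp}\,\rho_X$,
$$\rho_X(x)\le\rho_X(0)\exp\Big(-\int_0^x\frac{z}{\sigma^2T^{2H}}\,dz\Big)=\rho_X(0)\,e^{-\frac{x^2}{2\sigma^2T^{2H}}},$$
and the bound is trivial for $x$ outside the support. With $c:=\rho_X(0)$ and $y:=\ln x-E[\ln F]>0$ when $x>e^{E[\ln F]}$, the identity $\rho_F(x)=\frac1x\rho_X(y)$ yields (\ref{kfm5}).

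I expect the main obstacle to be the last integrability hypothesis of Proposition \ref{9hj3}: one must justify the commutation of the Malliavin derivative with the conditional expectation in computing $D_r\Phi_X$ and then control $\frac{1}{\Phi_X^2}\int_0^TD_s\Phi_XE[D_sX|\mathcal{F}_s]\,ds$ in $L^2(\Omega)$. Once the a.s.\ two-sided bound $0\le\Phi_X\le\sigma^2T^{2H}$ and the one-sided bound $D_r\Phi_X\ge 0$ are in hand, the monotonicity inequalities for $w_X$ and $h_X$ and the final change of variables are routine.
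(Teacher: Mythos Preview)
Your proposal is correct and follows essentially the same route as the paper's own proof: verify the hypotheses of Proposition~\ref{9hj3} for $X$ via Propositions~\ref{kfl6} and~\ref{iklw}, use the nonnegativity of $D_r\Phi_X$ to get $h_X\ge 0$ and the a.s.\ bound $\Phi_X\le\sigma^2T^{2H}$ to get $w_X(z)\ge z/(\sigma^2T^{2H})$ for $z>0$, and plug into (\ref{krkl2}). Your write-up is in fact slightly more careful than the paper's in making the commutation $D_rE[D_sX|\mathcal{F}_s]=E[D_rD_sX|\mathcal{F}_s]\ind_{\{r\le s\}}$ explicit (the paper drops the indicator, which is harmless here since all terms are nonnegative).
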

\begin{proof} Let $X$ be as in Proposition \ref{kfl6}. Obviously, we have $\Phi_X\neq 0\,\,a.s.$ Moreover, from the estimates (\ref{kld2}) and (\ref{kld3}) we obtain
\begin{align*}
0\leq D_s\Phi_X&=\int_0^TD_sD_\theta XE[D_\theta X|\mathcal{F}_\theta]d\theta+\int_0^TD_\theta XE[D_sD_\theta X|\mathcal{F}_\theta]d\theta\\
&\leq 4\sigma^3\int_0^T K^2(T,\theta)K(T,s)d\theta=4\sigma^3K(T,s) T^{2H}
\end{align*}
and
$$0\leq\int_0^TD_s\Phi_XE[D_sX|\mathcal{F}_s]ds\leq 4\sigma^4 T^{2H}\int_0^TK^2(T,s)ds=4\sigma^4 T^{4H}$$
Hence, it follows from Proposition \ref{iklw} that the random variable $\frac{1}{\Phi_X^2}\int_0^TD_s\Phi_XE[D_sX|\mathcal{F}_s]ds$ belong to $L^2(\Omega).$  We also have $\frac{X}{\Phi_X}\in L^2(\Omega)$ because $-|a|T-\sigma \max\limits_{0\leq s\leq T}B^H_s+\ln T-E[\ln F]\leq X\leq |a|T+\sigma \max\limits_{0\leq s\leq T}B^H_s+\ln T-E[\ln F]$ and hence, $X\in L^p(\Omega)$ for all $p\geq 2.$

In view of Proposition \ref{9hj3}, the density $\rho_X(x)$ of $X$ is given by
\begin{equation}
\rho_X(x)= \rho_X(0)\exp\left(-\int_0^x h_X(z)dz\right)\exp\left(-\int_0^x w_X(z)dz\right),\,\,\,x\in\mathrm{supp}\,\rho_X,
\end{equation}
where $w_X(z):=E\left[\frac{X}{\Phi_X}\big| X=z\right]$ and $h_X(z):=E\left[\frac{1}{\Phi_X^2}\int_0^TD_s\Phi_XE[D_sX|\mathcal{F}_s]ds\big| X=z\right].$

Since $h_X\geq 0,$ this implies that
$$\exp\left(-\int_0^x h_X(z)dz\right)\leq 1,\,\,x\geq 0.$$
From the estimate (\ref{kld2}) we have
$$0\leq \Phi_X\leq \sigma^2\int_0^T K^2(T,\theta)d\theta=\sigma^2T^{2H}\,\,a.s.$$
and we obtain
$$\exp\left(-\int_0^x w_F(z)dz\right)\leq e^{-\frac{x^2}{2\sigma^2 T^{2H}}},\,\,x\in \mathbb{R}.$$
So we can conclude that
$$\rho_X(x)\leq  \rho_X(0)e^{-\frac{x^2}{2\sigma^2 T^{2H}}},\,\,x\geq 0,$$
and (\ref{kfm5}) follows because $\rho_F(x)=\frac{1}{x}\rho_X(\ln x-E[\ln F]).$ The proof of Theorem is complete.
\end{proof}

\noindent {\bf Acknowledgments.} The author would like to thank the anonymous referee for their valuable comments for improving the paper. This research was funded by the Vietnam National University, Hanoi under grant number QG.20.21. A part of this paper was done while the first author was visiting the Vietnam Institute for Advanced Study in Mathematics (VIASM). He would like to thank the VIASM for financial support and hospitality.

\end{document}